\shorttitle{Partial stochastic dominance} % insert short title here for use in running head
\begin{document}

\title{Partial stochastic dominance for the multivariate Gaussian distribution} % insert title - use \\ if it requires more than one line.

\authorone[Lancaster University]{Amanda Turner} % Affiliation is just the name of your university or institution
\addressone{Department of Mathematics and Statistics, Lancaster University, Lancaster, LA1 4YF, United Kingdom} % Your postal address goes here.
\emailone{a.g.turner@lancaster.ac.uk}
\vspace{-5ex} %%remove this for submission
\authortwo[Lancaster University]{John Whitehead}
\addresstwo{Department of Mathematics and Statistics, Lancaster University, Lancaster, LA1 4YF, United Kingdom} % Your postal address goes here.
\emailtwo{j.whitehead@lancaster.ac.uk}

\begin{abstract}
Gaussian comparison inequalities provide a way of bounding probabilities relating to multivariate Gaussian random vectors in terms of probabilities of random variables with simpler correlation structures. In this paper, we establish the partial stochastic dominance result that the cumulative distribution function of the maximum of a multivariate normal random vector, with positive intraclass correlation coefficient, intersects the cumulative distribution function of a standard normal random variable at most once. This result can be applied to the Bayesian design of a clinical trial in which several experimental treatments are compared to a single control.
\end{abstract}

\keywords{Gaussian comparison inequalities; stochastic dominance; multivariate normal distribution; positive intraclass correlation coefficient} % insert keywords separated by a semicolon

\ams{60E15}{62E17; 62G30} % insert the primary Maths Subject Classification number in the first bracket
         % and the secondary ams number(s) in the second bracket
         % e.g. \ams{60E20}{49G03;49F10}

\section{Introduction}

Gaussian comparison inequalities provide a useful tool in probability and statistics, with applications in areas including Gaussian processes and extreme value theory. A survey of results and applications can be found in the books by Ledoux and Talagrand \cite{lt91} and Lifshits \cite{lif95}. Suppose that $\boldsymbol{X}=(X_1, \dots, X_k)$ and $\boldsymbol{Y}=(Y_1, \dots, Y_k)$ are two multivariate Gaussian vectors. Comparison inequalities typically involve finding conditions on the correlation structures of $\boldsymbol{X}$ and $\boldsymbol{Y}$ from which it can be deduced that $\mathbb{P}(\boldsymbol{X} \in C) \leq \mathbb{P}(\boldsymbol{Y} \in C)$ for some suitable class of sets $C \in \mathbb{R}^k$, usually of the form $\prod_{i=1}^k (-\infty, x_i]$. An important example is Slepian's inequality \cite{slep62} which states that if $\mathbb{E}(\boldsymbol{X})=\mathbb{E}(\boldsymbol{Y})$, $\mathbb{E}(
X_i^2)=\mathbb{E}(Y_i^2)$ for all $i$ and $\mathbb{E}(X_iX_j)\leq\mathbb{E}(Y_iY_j)$ for all $i \neq j$, then $\mathbb{P}(X_1 \leq x_1, \dots, X_k \leq x_k) \leq \mathbb{P}(Y_1 \leq x_1, \dots, Y_k \leq x_k)$ for all $(x_1, \dots, x_k) \in \mathbb{R}^k$.  

A direct consequence of Slepian's inequality is that $F_{X^*}(x) \leq F_{Y^*}(x)$ for all $x \in \mathbb{R}$, where $X^* = \max \{ X_1, \dots, X_k \}$ and $Y^* = \max \{ Y_1, \dots, Y_k \}$, so $X^*$ stochastically dominates $Y^*$ and the distribution functions of $X^*$ and $Y^*$ never cross each other. In this paper, by contrast, we obtain a {\it partial} stochastic dominance result by showing that, under certain assumptions on $\boldsymbol{X}$, $F_{X^*}(x)$ intersects the standard Gaussian distribution function $\Phi(x)$ {\it at most once}. Specifically, we assume that $\boldsymbol{X}$  is a multivariate normal random vector with variances equal to 1 and covariances equal and positive (that is, $\boldsymbol{X}$ has the intraclass correlation structure with positive correlation coefficient). Our partial stochastic dominance result shows that there are three possible cases that can arise, depending on the values of the expectation and correlation coefficient of $\boldsymbol{X}$: the standard Gaussian distribution dominates that of $X^*$; the 
distribution of $X^*$ dominates the standard Gaussian distribution; or there exists some value $x_0 \in \mathbb{R}$ such that $X^*$ dominates the standard Gaussian on the interval $(-\infty, x_0)$ but the standard Gaussian dominates $X^*$ on $(x_0, \infty)$. 

Multivariate normal random vectors with the intraclass correlation structure occur in random effects models in which the error in a measurement arises as a combination of a class-specific error and an individual-specific error. More precisely, $X_{i} = \mu_i + \sqrt{\rho} Y_0 + \sqrt{1-\rho} Y_{i}$ for $i=1, \dots, k,$ where $\rho \in (0,1)$ and the $Y_0, \dots, Y_k$ are independent standard normal random variables. Our motivation for this work was an application to the Bayesian design of exploratory clinical trials in which $k$ experimental treatments are compared to a single control \cite{wct14}. In that paper, one or more of the treatments is suitable to be developed further in a phase III trial if there is a sufficiently high probability that at least one treatment out-performs the control by a given threshold. Corollary \ref{maincor} enables us to quantify the effect of increasing the threshold on that probability. This is then used to recommend an appropriate sample size for the trial. 

The main results are stated in Section \ref{resultsec} and proved in Section \ref{proofsec}. The proof is surprisingly long and technical, as well as being very sensitive to the assumptions. We are not aware of of any simplifications to the argument, however, nor of other results in the literature that enable a comparison of this form. 

\section{Statement of results}
\label{resultsec}

In this section, we state our main theorem, which is then proved in Section \ref{proofsec}. We also state and prove the corollary of this result that is used in \cite{wct14}.

We begin with some notation. For $\rho \in (0,1)$ and $\boldsymbol{\mu} = (\mu_1, \dots, \mu_k) \in \mathbb{R}^k$, let $(X_1, \dots, X_k) \sim N (\boldsymbol{\mu}, \boldsymbol{\Sigma})$ be a multivariate Gaussian random vector with $\Sigma_{ij}=\rho + (1-\rho) \delta_{ij}$, where $\delta_{ij}$ is the Kronecker delta. Let $X^* = \max \{ X_1, \dots, X_k \}$. For any random variable $Y$, we denote the cumulative distribution function of $Y$ by $F_Y$ and the probability density function of $Y$ by $f_Y$. In the special case when $Y \sim N(0,1)$, we set $\Phi=F_Y$ and $\phi=f_Y$.

\begin{theorem}
\label{mainthm}
For any $\rho \in (0,1)$ and $\boldsymbol{\mu} \in \mathbb{R}^k$, the cumulative distribution functions $F_{X^*}(x)$ and $\Phi(x)$ intersect at most once. Furthermore, if $F_{X^*}(x_0) = \Phi(x_0)$ for some $x_0 \in \mathbb{R}$, then $f_{X^*}(x_0) > \phi(x_0)$. 
\end{theorem}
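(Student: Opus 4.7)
The plan is to prove the density inequality at a crossing first, from which uniqueness of the crossing follows by a soft argument. Let $h(x) := F_{X^*}(x) - \Phi(x)$, so $h(\pm\infty)=0$; if $h'(x_0) > 0$ at every zero of $h$, then two zeros would force some intermediate point with $h' \leq 0$, contradicting the hypothesis, so $h$ has at most one zero.

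To handle the density inequality I would condition on the common component $Y_0$ in the representation $X_i = \mu_i + \sqrt{\rho}\, Y_0 + \sqrt{1-\rho}\, Y_i$ with $Y_0,Y_1,\dots,Y_k$ i.i.d.\ $N(0,1)$. Setting $\nu_i := \mu_i/\sqrt{1-\rho}$ and $U := (x - \sqrt{\rho} Y_0)/\sqrt{1-\rho}$ gives
\[
F_{X^*}(x) - \Phi(x) = -\mathbb{E}[g(U)], \qquad g(u) := \Phi(u) - \prod_{i=1}^k \Phi(u-\nu_i),
\]
so the intersection condition at $x_0$ is $\mathbb{E}[g(U)]=0$, where $U \sim N(x_0/\sqrt{1-\rho},\, \rho/(1-\rho))$. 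After differentiating in $x$ and applying Gaussian integration by parts (Stein's identity), the target inequality $f_{X^*}(x_0) > \phi(x_0)$ becomes $\mathrm{Cov}(g(U),U) < 0$. This in turn follows from the single claim that $g$ has at most one sign change as $u$ increases, passing from positive to negative at some $u_* \in \mathbb{R}$: then $g(u)(u-u_*) \leq 0$ everywhere, with strict inequality on a set of positive measure, and using $\mathbb{E}[g(U)]=0$ one rewrites $\mathrm{Cov}(g(U),U) = \mathbb{E}[g(U)(U-u_*)] < 0$.

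Establishing the sign-change property of $g$ is where the argument becomes delicate, and I expect this to be the main obstacle. A preliminary reduction: if $\max_i \nu_i > 0$ then $\Phi(u-\nu_j)<\Phi(u)$ for the offending index $j$, forcing $g(u) > 0$ everywhere and excluding any crossing, so we may assume every $\nu_i \leq 0$, with strict inequality for at least one $i$. A routine asymptotic calculation then shows $g(u)>0$ for $u$ sufficiently negative and $g(u)<0$ for $u$ sufficiently positive, so a sign change does exist. To show it is unique it suffices to verify $g'(u_*) \leq 0$ at every zero $u_*$; using the zero relation $\Phi(u_*) = \prod_i \Phi(u_* - \nu_i)$ to cancel the product of CDFs, this reduces to the pointwise inequality
\[
\sum_{i=1}^k r(u_* - \nu_i) \geq r(u_*), \qquad r(u) := \phi(u)/\Phi(u).
\]
Since the reverse Mill's ratio $r$ is strictly decreasing and each $u_* - \nu_i \geq u_*$, every summand on the left is individually \emph{smaller} than the right-hand side, so the inequality cannot be obtained from monotonicity alone. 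The delicate part is to exploit the convexity of $r$ together with the full strength of the constraint $\sum_i \log\Phi(u_* - \nu_i) = \log\Phi(u_*)$ coming from the zero relation, in order to extract enough aggregate contribution from the $k$ summands to outweigh the deficit in each one. I expect the bulk of the technical work, and the sensitivity to the positive intraclass assumption, to live in precisely this step.
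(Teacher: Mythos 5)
Your reduction is sound, and it is a genuinely different route from the paper's: instead of their scheme of inverting $G$ in $\nu_0$, inducting on coordinates via limits $\nu_k \leq \cdots \leq \nu_{i+1} \to -\infty$, solving a first-order linear ODE in each $\nu_i$, and symmetrizing a double integral to reduce to monotonicity of a ratio $H_i$, you condition on the shared factor $Y_0$ once and collapse everything to a one-dimensional single-crossing problem for $g(u)=\Phi(u)-\prod_i\Phi(u-\nu_i)$, plus a Chebyshev-type covariance argument. The identity $\Phi(x)=\mathbb{E}[\Phi(U)]$, the Stein step giving $f_{X^*}(x_0)-\phi(x_0) = -\frac{\sqrt{1-\rho}}{\rho}\,\mathrm{Cov}(g(U),U)$, and the sign argument from $\mathbb{E}[g(U)]=0$ all check out. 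But as you yourself flag, the decisive inequality $\sum_{i=1}^k r(u_*-\nu_i) \geq r(u_*)$ at a zero $u_*$ of $g$ is left unproved, and this is exactly where the theorem's content lives, so as written there is a genuine gap. There is also a small slip in the reduction: you must assume every $\nu_i<0$ strictly, since if some $\nu_j=0$ (with $k\geq 2$) then $\prod_i\Phi(u-\nu_i)=\Phi(u)\prod_{i\neq j}\Phi(u-\nu_i)<\Phi(u)$, so $g>0$ everywhere and your claim that $g(u)<0$ for large $u$ fails; that case is harmless only because $\mathbb{E}[g(U)]=0$ is then impossible, so no intersection exists and nothing needs proving.

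The good news is that your gap closes, and with precisely the ingredient the paper uses at its final step. At a zero, the relation $\sum_i\log\Phi(u_*-\nu_i)=\log\Phi(u_*)$ suggests the substitution $t=-\log\Phi(a)$: set $t_i=-\log\Phi(u_*-\nu_i)>0$, so $\sum_i t_i = T:=-\log\Phi(u_*)$, and define $\chi(t)=r\bigl(\Phi^{-1}(e^{-t})\bigr)$ for $t>0$, so the required inequality reads $\sum_i\chi(t_i)\geq\chi(T)$. Writing $a(t)=\Phi^{-1}(e^{-t})$ and using $r'(a)=-r(a)\,(a+r(a))$, one computes $a'(t)=-1/r(a)$, hence $\chi'(t)=a+r(a)$ and
\[
\chi''(t) = -\,\frac{1+r'(a)}{r(a)} < 0,
\]
where the strict negativity is exactly Sampford's bound $r'(a)>-1$ \cite{samp53} --- the same inequality the paper invokes to show $\partial H_i/\partial t \geq 0$. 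Since $\chi$ is strictly concave on $(0,\infty)$ with $\chi(0^+)=0$ (as $t\to 0^+$, $a\to+\infty$ and $r(a)\to 0$), it is strictly subadditive, so for $k\geq 2$ and $t_i>0$ with $\sum_i t_i=T$ we get $\sum_i r(u_*-\nu_i)=\sum_i\chi(t_i)>\chi(T)=r(u_*)$, i.e.\ $g'(u_*)<0$ strictly at every zero. Combined with your asymptotics (valid once all $\nu_i<0$), this gives the unique downward crossing, the strict covariance bound, and hence both assertions of the theorem. So your architecture is correct and, once this subadditivity lemma is inserted, arguably shorter and more transparent than the paper's induction; it is notable that both proofs ultimately rest on the same property of the inverse Mill's ratio, which supports the authors' remark that the result is very sensitive to the assumptions.
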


A direct consequence of this result is that 
\begin{align*}
F_{X^*}(x) > \Phi(x) & \quad \mbox{ for all } x>x_0; \\
F_{X^*}(x) < \Phi(x) & \quad \mbox{ for all } x<x_0. 
\end{align*} 
Equivalently, if $Z \sim N(0,1)$, then the conditional distribution of $X^*|(X^*>x_0)$ is stochastically dominated by the conditional distribution of $Z | (Z > x_0)$ and the conditional distribution of $X^*|(X^*<x_0)$ stochastically dominates the conditional distribution of $Z | (Z < x_0)$. 

\begin{remark}
Observe that, for each $i=1, \dots, k$, $F_{X^*}(x) \leq \Phi(x-\mu_i)$ for all $x \in \mathbb{R}$. Therefore, $F_{X^*}(x)$ and $\Phi(x)$ do not intersect if $\max\{\mu_1, \dots, \mu_k\} > 0$. In the degenerate case $\rho=1$ and $\boldsymbol{\mu}=0$, $F_{X^*}(x)=\Phi(x)$ for all $x \in \mathbb{R}$.
\end{remark}

\begin{corollary}
\label{maincor}
Suppose that $\mathbb{P}(X_i < 0 \mbox{ for all } i=1, \dots, k) \geq \kappa$ for some $\kappa \in (0,1)$. Then $\mathbb{P}(X_i < \Phi^{-1}(\zeta) - \Phi^{-1}(\kappa) \mbox{ for all } i=1, \dots, k ) > \zeta$ for all $\zeta \in (\kappa, 1)$.
\end{corollary}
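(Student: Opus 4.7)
The plan is to translate the random vector so that the hypothesis becomes a comparison at a single point $c = \Phi^{-1}(\kappa)$, and then apply Theorem \ref{mainthm} to the translated vector. Specifically, I would set $\tilde{X}_i = X_i + c$ and $\tilde{X}^* = X^* + c$. Then $(\tilde{X}_1,\dots,\tilde{X}_k) \sim N(\boldsymbol{\mu}+c\boldsymbol{1},\boldsymbol{\Sigma})$ still has intraclass covariance structure with the same $\rho$, so Theorem \ref{mainthm} applies to $F_{\tilde{X}^*}$ and $\Phi$. Moreover, setting $y = \Phi^{-1}(\zeta)$ (so that $\zeta \in (\kappa,1)$ corresponds to $y \in (c,\infty)$), the conclusion to be proved becomes $F_{X^*}(y - c) > \Phi(y)$ for all $y > c$, which is exactly $F_{\tilde{X}^*}(y) > \Phi(y)$ for all $y > c$. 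The hypothesis gives $F_{\tilde{X}^*}(c) = F_{X^*}(0) \geq \kappa = \Phi(c)$.

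Now I would split into the two cases suggested by this last inequality. If $F_{\tilde{X}^*}(c) > \Phi(c)$, then Theorem \ref{mainthm} together with the displayed consequence immediately after it forces $F_{\tilde{X}^*}(y) > \Phi(y)$ for all $y \geq c$: any intersection point $x_0$ of the two CDFs would have to lie strictly below $c$ (otherwise $F_{\tilde{X}^*}(c) \leq \Phi(c)$), and on $(x_0,\infty) \supseteq [c,\infty)$ we have $F_{\tilde{X}^*} > \Phi$. If there is no intersection at all, then since $F_{\tilde{X}^*}(c) > \Phi(c)$ the inequality $F_{\tilde{X}^*} > \Phi$ holds everywhere, in particular for $y > c$.

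The boundary case $F_{\tilde{X}^*}(c) = \Phi(c)$ is where the second assertion of Theorem \ref{mainthm} is essential: it yields $f_{\tilde{X}^*}(c) > \phi(c)$, so $c$ plays the role of the unique intersection point $x_0$, and the displayed consequence of the theorem then gives $F_{\tilde{X}^*}(y) > \Phi(y)$ strictly for all $y > c$. Combining the cases, $F_{X^*}(\Phi^{-1}(\zeta) - \Phi^{-1}(\kappa)) = F_{\tilde{X}^*}(\Phi^{-1}(\zeta)) > \Phi(\Phi^{-1}(\zeta)) = \zeta$ for every $\zeta \in (\kappa,1)$.

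The only subtle point is the equality case, where one might be tempted to conclude only the weak inequality $F_{X^*}(\Phi^{-1}(\zeta)-\Phi^{-1}(\kappa)) \geq \zeta$; the strict inequality is the reason the second half of Theorem \ref{mainthm} (the strict ordering of densities at any crossing point) is needed, rather than merely the at-most-one-crossing statement.
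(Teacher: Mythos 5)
Your proof is correct and is essentially the paper's own argument: the paper likewise translates the vector by $\Phi^{-1}(\kappa)$ (defining $Y^* = \max\{X_1 + \Phi^{-1}(\kappa), \dots, X_k + \Phi^{-1}(\kappa)\}$), observes $F_{Y^*}(\Phi^{-1}(\kappa)) \geq \Phi(\Phi^{-1}(\kappa))$, and applies Theorem \ref{mainthm} to conclude $F_{Y^*}(\Phi^{-1}(\zeta)) > \zeta$ for $\zeta \in (\kappa,1)$. The only difference is that you spell out the case analysis (strict inequality versus equality at $\Phi^{-1}(\kappa)$, and the role of $f_{X^*}(x_0) > \phi(x_0)$ in fixing the direction of the crossing) that the paper's terser proof leaves implicit.
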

\begin{proof}[Proof of Corollary \ref{maincor}]
We have 
\begin{align*}
\Phi (\Phi^{-1}(\kappa)) &\leq \mathbb{P}(X_i < 0 \mbox{ for all } i=1, \dots, k) \\
&= \mathbb{P}( X_i + \Phi^{-1}(\kappa) <  \Phi^{-1}(\kappa) \mbox{ for all } i=1, \dots, k ) \\
&= F_{Y^*}(\Phi^{-1}(\kappa)),
\end{align*}
where $Y^* = \max\{X_1 + \Phi^{-1}(\kappa), \dots, X_k + \Phi^{-1}(\kappa) \}$.  If $\zeta > \kappa$, then $\Phi^{-1}(\zeta) > \Phi^{-1}(\kappa)$ and so, applying Theorem \ref{mainthm} to $(X_1 + \Phi^{-1}(\kappa), \dots, X_k + \Phi^{-1}(\kappa))$, gives $F_{Y^*}(\Phi^{-1}(\zeta)) > \Phi(\Phi^{-1}(\zeta)) = \zeta$, as required.
\end{proof}

\section{Proof of main result}
\label{proofsec}

In this section we provide a proof of Theorem \ref{mainthm}. We begin by expressing $F_{X^*}(x)$ in terms of independent identically distributed (i.i.d.)~Gaussian random variables. We then show that the theorem holds provided that a quantity expressed in terms of these i.i.d.~random variables can be shown to be strictly positive. This quantity is obtained as the solution to a first order linear differential equation with variable linear coefficient. Positivity follows by showing that the linear coefficient is negative. The coefficient is expressed in terms of standard univariate Gaussian density functions which then enables us to deduce positivity as a consequence of properties of the inverse Mill's ratio.

For $\rho \in (0,1)$, $\nu_0 \in \mathbb{R}$ and $\boldsymbol{\nu}=(\nu_1, \dots, \nu_k) \in \mathbb{R}^k$, let $Y_0, \dots, Y_k$ be independent Gaussian random variables with $Y_0 \sim N(\nu_0, \rho)$ and $Y_i = N(\nu_i, 1-\rho)$ for $i=1, \dots, k$. Let 
\[
G(\nu_0, \boldsymbol{\nu}) = \mathbb{P}(\max \{ Y_1-Y_0, \dots, Y_k-Y_0 \}< 0).
\]
Observe that $(Y_1 - Y_0, \dots, Y_k-Y_0) \sim N(\boldsymbol{\nu} - \nu_0, \boldsymbol{\Sigma})$ where $\boldsymbol{\Sigma}$ is as defined in Section \ref{resultsec} and hence $F_{X^*}(x) = G(x, \boldsymbol{\mu})= G(0, \boldsymbol{\mu}-x)$. 

As $G(\nu_0, \boldsymbol{\nu})$ is strictly increasing in $\nu_0 \in (-\infty, \infty)$ from 0 to 1, there exists a unique function $g:(0,1) \times \mathbb{R}^k \to \mathbb{R}$ such that $G(g(\zeta, \boldsymbol{\nu}), \boldsymbol{\nu})=\zeta$. So $F_{X^*}(x) = \Phi(x)$ for some $x \in \mathbb{R}$ if and only if $g(\zeta, \boldsymbol{\mu}) = \Phi^{-1}(\zeta)$ for some $\zeta \in (0,1)$. In order to show that there is at most one value of $x$ for which $F_{X^*}(x) = \Phi(x)$, it is enough to show that $h(\zeta, \boldsymbol{\nu})=\Phi^{-1}(\zeta)-g(\zeta, \boldsymbol{\nu})$ is strictly increasing in $\zeta$ or equivalently that
\[
z(\zeta, \boldsymbol{\nu}) = \frac{\partial h}{\partial \zeta}(\zeta, \boldsymbol{\nu}) = \left( \phi(\Phi^{-1}(\zeta)) \right )^{-1} - \frac{\partial g}{\partial \zeta}(\zeta, \boldsymbol{\nu}) > 0.
\]
The remainder of the theorem also follows directly from this result by the argument below. Since 
$G(g(\zeta, \boldsymbol{\nu}), \boldsymbol{\nu}) = \zeta$, differentiating with respect to $\zeta$ gives 
\[
\frac{\partial G}{\partial \nu_0}(g(\zeta, \boldsymbol{\nu}), \boldsymbol{\nu}) \frac{\partial g}{ \partial \zeta}(\zeta, \boldsymbol{\nu}) = 1.
\]
Suppose there exists some $x_0 \in \mathbb{R}$ such that $F_{X^*}(x_0) = \Phi(x_0)$. Let $\zeta_0 = \Phi(x_0)$ so $g(\zeta_0, \boldsymbol{\mu}) = x_0 = \Phi^{-1}(\zeta_0)$. Then 
\[
f_{X^*}(x_0) = \frac{\partial G}{\partial \nu_0}(x_0, \boldsymbol{\mu}) = \left ( \frac{\partial g}{ \partial \zeta}(\zeta_0, \boldsymbol{\mu}) \right )^{-1}  > \phi(\Phi^{-1}(\zeta_0)) = \phi(x_0)
\]
as required.

By symmetry, it is sufficient to prove that $z(\zeta, \boldsymbol{\nu})>0$ in the case $\nu_1 \geq \cdots \geq \nu_k$. 
Now $G(\nu_0, \boldsymbol{\nu}) \to \mathbb{P}(X_1 \leq \nu_0) = \Phi(\nu_0 - \nu_1)$ as $\nu_k \leq \cdots \leq \nu_2 \to -\infty$. Since $G$ and its derivatives are equicontinuous in all variables, it follows that $g(\zeta, \boldsymbol{\nu}) \to \nu_1 + \Phi^{-1}(\zeta)$ and $z(\zeta, \boldsymbol{\nu}) \to \frac{\partial \nu_1}{\partial \zeta} = 0$ as $\nu_k \leq \cdots \leq \nu_2 \to -\infty$. We abusively use the notation $f(\boldsymbol{\nu}^i)$ to denote $\lim_{\nu_k \leq \cdots \leq \nu_{i+1} \to -\infty} f(\boldsymbol{\nu})$, so $z(\zeta, \boldsymbol{\nu}^1) = 0$ for all values of $\zeta \in (0,1)$ and $\nu_1 \in \mathbb{R}$.

Recall that $G(g(\zeta, \boldsymbol{\nu}), \boldsymbol{\nu}) = \zeta$. Differentiating both sides with respect to $\nu_i$, $i=1, \dots, k$, gives
\[
\frac{\partial G}{\partial \nu_i}(g(\zeta, \boldsymbol{\nu}), \boldsymbol{\nu}) + \frac{\partial G}{\partial \nu_0}(g(\zeta, \boldsymbol{\nu}), \boldsymbol{\nu}) \frac{\partial g}{ \partial \nu_i}(\zeta, \boldsymbol{\nu}) = 0
\]
and hence
\[ 
\frac{\partial g}{ \partial \nu_i}(\zeta, \boldsymbol{\nu}) = - Q_i(g(\zeta, \boldsymbol{\nu}), \boldsymbol{\nu})
\]
where
\[
Q_i (\nu_0, \boldsymbol{\nu}) = \frac{\frac{\partial G}{\partial \nu_i}}{\frac{\partial G}{\partial \nu_0}} (\nu_0, \boldsymbol{\nu}).
\]
It follows that 
\begin{align*}
\frac{\partial}{\partial \nu_i} \left ( z(\zeta, \boldsymbol{\nu}) - \left( \phi(\Phi^{-1}(\zeta)) \right )^{-1} \right ) 
&= \frac{\partial}{\partial \zeta} \frac{\partial h}{\partial \nu_i} (\zeta, \boldsymbol{\nu})\\
&= \frac{\partial}{\partial \zeta} \left (Q_i(g(\zeta, \boldsymbol{\nu}), \boldsymbol{\nu})\right ) \\
&= \frac{\partial Q_i}{\partial \nu_0} (g(\zeta, \boldsymbol{\nu}), \boldsymbol{\nu}) \frac{\partial g}{\partial \zeta}(\zeta, \boldsymbol{\nu}) \\
&= - \frac{\partial Q_i}{\partial \nu_0} (g(\zeta, \boldsymbol{\nu}), \boldsymbol{\nu}) \left ( z(\zeta, \boldsymbol{\nu}) - \left( \phi(\Phi^{-1}(\zeta)) \right )^{-1} \right ).
\end{align*}
Therefore, for each $i=1, \dots, k$, if $\nu_j$ is fixed for all $j \neq i$, $z(\zeta, \boldsymbol{\nu}) - \left( \phi(\Phi^{-1}(\zeta)) \right )^{-1}$ is the solution to a first order linear differential equation in $\nu_i$ and hence we can evaluate $z(\zeta, \boldsymbol{\nu}^i)$ inductively for $i=2, \dots, k$ by
\[
z(\zeta, \boldsymbol{\nu}^i) - \left( \phi(\Phi^{-1}(\zeta)) \right )^{-1} = \left ( z(\zeta, \boldsymbol{\nu}^{i-1}) - \left( \phi(\Phi^{-1}(\zeta)) \right )^{-1} \right ) \exp \left ( - \int_{-\infty}^{\nu_i} A_i(t) dt \right ),
\]
where $A_i(t)$ is the limit of 
\[
\frac{\partial Q_i}{\partial \nu_0}(g(\zeta, \nu_1, \dots, \nu_{i-1}, t, \nu_{i+1}, \dots, \nu_k), \nu_1, \dots, \nu_{i-1}, t, \nu_{i+1}, \dots, \nu_k) 
\]
as $\nu_k \leq \cdots \leq \nu_{i+1} \to -\infty$.
Hence
\begin{align*}
 z(\zeta, \boldsymbol{\nu}^i) = &\ z(\zeta, \boldsymbol{\nu}^{i-1}) \exp \left ( - \int_{-\infty}^{\nu_i} A_i(t) dt \right ) \\
&\ + \left( \phi(\Phi^{-1}(\zeta)) \right )^{-1} \left ( 1 - \exp \left ( - \int_{-\infty}^{\nu_i} A_i(t) dt \right ) \right ).
\end{align*}
Suppose that it is true that $z(\zeta, \boldsymbol{\nu}^{i-1})\geq 0$, for some $i \geq 2$.  Then, if
\begin{equation}
\label{qineq}
\frac{\partial Q_i}{\partial \nu_0}(\nu_0, \boldsymbol{\nu}^i) \geq 0 \mbox{ for all } \nu_0 \in \mathbb{R} \mbox{ and } \nu_1 \geq \cdots \geq \nu_i, 
\end{equation}
with strict inequality unless $\nu_1=\nu_2=\cdots=\nu_i$, then it follows that $z(\zeta, \boldsymbol{\nu}^{i}) > 0$, and by induction the theorem will be proven.

The remainder of this proof is concerned with showing that \eqref{qineq} holds for $i=1, \dots, k$. Since
\[
\frac{\partial Q_i}{\partial \nu_0} = \frac{\partial}{\partial \nu_0} \left ( \frac{\frac{\partial G}{\partial \nu_i}}{\frac{\partial G}{\partial \nu_0}} \right )
= \frac{\frac{\partial^2G}{\partial \nu_0 \partial \nu_i}\frac{\partial G}{\partial \nu_0}- \frac{\partial^2G}{\partial \nu_0^2}\frac{\partial G}{\partial \nu_i}}{\left ( \frac{\partial G}{\partial \nu_0}\right )^2},
\]
it is sufficient to show that
\[
\Delta_i = \left ( \frac{\partial^2G}{\partial \nu_0 \partial \nu_i}\frac{\partial G}{\partial \nu_0}- \frac{\partial^2G}{\partial \nu_0^2}\frac{\partial G}{\partial \nu_i} \right ) (\nu_0, \boldsymbol{\nu}^i) \geq 0
\] 
for all $i=1, \dots, k$, $\nu_0 \in \mathbb{R}$ and $\nu_1 \geq \cdots \geq \nu_i$ with strict inequality unless $\nu_1=\nu_2=\cdots=\nu_i$.

Now
\begin{align*}
G(\nu_0, \boldsymbol{\nu}) &= \mathbb{P}(\max \{ Y_1, \dots, Y_k \} < Y_0) \\
&= \int_{-\infty}^{\infty} f_{Y_0}(t) \mathbb{P}(\max \{ Y_1, \dots, Y_k \} < t) dt \\
&= \frac{1}{\sqrt{\rho}} \int_{-\infty}^{\infty} \phi \left ( \frac{t-\nu_0}{\sqrt{\rho}} \right ) \prod_{j=1}^k \Phi \left ( \frac{t-\nu_j}{\sqrt{1-\rho}} \right ) dt.
\end{align*}
Hence
\begin{align*}
\frac{\partial G}{\partial \nu_0}(\nu_0, \boldsymbol{\nu}) 
= & \ -\frac{1}{\rho} \int_{-\infty}^{\infty} \phi' \left ( \frac{t-\nu_0}{\sqrt{\rho}} \right ) \prod_{j=1}^k \Phi \left ( \frac{t-\nu_j}{\sqrt{1-\rho}} \right ) dt \\
= & \ - \frac{1}{\sqrt{\rho}}\left [ \phi \left ( \frac{t-\nu_0}{\sqrt{\rho}} \right ) \prod_{j=1}^k \Phi \left ( \frac{t-\nu_j}{\sqrt{1-\rho}} \right ) \right ]_{-\infty}^{\infty} \\
  & \ + \frac{1}{\sqrt{\rho}}\int_{-\infty}^{\infty} \phi \left ( \frac{t-\nu_0}{\sqrt{\rho}} \right ) \frac{\partial}{\partial t} \left ( \prod_{j=1}^k \Phi \left ( \frac{t-\nu_j}{\sqrt{1-\rho}} \right ) \right ) dt \\
= \ &	\frac{1}{\sqrt{\rho}}\int_{-\infty}^{\infty} \phi \left ( \frac{t-\nu_0}{\sqrt{\rho}} \right ) \frac{\partial}{\partial t} \left ( \prod_{j=1}^k \Phi \left ( \frac{t-\nu_j}{\sqrt{1-\rho}} \right ) \right ) dt, 
\end{align*}
\[
\frac{\partial G}{\partial \nu_i}(\nu_0, \boldsymbol{\nu})
= - \frac{1}{\sqrt{\rho(1-\rho)}} \int_{-\infty}^{\infty} \phi \left ( \frac{t-\nu_0}{\sqrt{\rho}} \right ) \phi \left ( \frac{t-\nu_i}{\sqrt{1-\rho}} \right ) \prod_{j\neq i} \Phi \left ( \frac{t-\nu_j}{\sqrt{1-\rho}} \right ) dt,
\]
\[
\frac{\partial^2 G}{\partial \nu_0 \partial \nu_i}(\nu_0, \boldsymbol{\nu})
= \frac{1}{\sqrt{\rho(1-\rho)}} \int_{-\infty}^{\infty} \phi' \left ( \frac{t-\nu_0}{\sqrt{\rho}} \right ) \phi \left ( \frac{t-\nu_i}{\sqrt{1-\rho}} \right ) \prod_{j\neq i} \Phi \left ( \frac{t-\nu_j}{\sqrt{1-\rho}} \right ) dt, 
\]
and
\[
\frac{\partial^2 G}{\partial \nu_0^2}(\nu_0, \boldsymbol{\nu})
= \frac{1}{\rho}\int_{-\infty}^{\infty} \phi' \left ( \frac{t-\nu_0}{\sqrt{\rho}} \right ) \frac{\partial}{\partial t} \left ( \prod_{j=1}^k \Phi \left ( \frac{t-\nu_j}{\sqrt{1-\rho}} \right ) \right ) dt. 
\]
It follows that
\begin{align*}
\Delta_i = & \ \frac{1}{\sqrt{\rho(1-\rho)}} \int_{-\infty}^{\infty} \phi' \left ( \frac{s-\nu_0}{\sqrt{\rho}} \right ) \phi \left ( \frac{s-\nu_i}{\sqrt{1-\rho}} \right ) \prod_{j\neq i} \Phi \left ( \frac{s-\nu_j}{\sqrt{1-\rho}} \right ) ds \\
& \ \times \frac{1}{\sqrt{\rho}}\int_{-\infty}^{\infty} \phi \left ( \frac{t-\nu_0}{\sqrt{\rho}} \right ) \frac{\partial}{\partial t} \left ( \prod_{j=1}^k \Phi \left ( \frac{t-\nu_j}{\sqrt{1-\rho}} \right ) \right ) dt \\
& \ - \frac{1}{\sqrt{\rho(1-\rho)}} \int_{-\infty}^{\infty} \phi \left ( \frac{s-\nu_0}{\sqrt{\rho}} \right ) \phi \left ( \frac{s-\nu_i}{\sqrt{1-\rho}} \right ) \prod_{j\neq i} \Phi \left ( \frac{s-\nu_j}{\sqrt{1-\rho}} \right ) ds \\
& \ \times \frac{1}{\rho}\int_{-\infty}^{\infty} \phi' \left ( \frac{t-\nu_0}{\sqrt{\rho}} \right ) \frac{\partial}{\partial t} \left ( \prod_{j=1}^k \Phi \left ( \frac{t-\nu_j}{\sqrt{1-\rho}} \right ) \right ) dt \\
= & \ \frac{1}{\rho^2 \sqrt{1-\rho}} \int_{-\infty}^{\infty} \int_{-\infty}^{\infty} \Bigg \{ (s-t) \phi \left ( \frac{s-\nu_0}{\sqrt{\rho}} \right ) \phi \left ( \frac{t-\nu_0}{\sqrt{\rho}} \right ) \phi \left ( \frac{s-\nu_i}{\sqrt{1-\rho}} \right )  \\
& \ \times  \prod_{j\neq i} \Phi \left ( \frac{s-\nu_j}{\sqrt{1-\rho}} \right ) \frac{\partial}{\partial t} \left ( \prod_{j=1}^k \Phi \left ( \frac{t-\nu_j}{\sqrt{1-\rho}} \right ) \right ) \Bigg \} ds dt.
\end{align*}
Interchanging $s$ and $t$, we also have
\begin{align*}
\Delta_i = & \ \frac{1}{\rho^2 \sqrt{1-\rho}} \int_{-\infty}^{\infty} \int_{-\infty}^{\infty} \Bigg \{ (t-s) \phi \left ( \frac{s-\nu_0}{\sqrt{\rho}} \right ) \phi \left ( \frac{t-\nu_0}{\sqrt{\rho}} \right ) \phi \left ( \frac{t-\nu_i}{\sqrt{1-\rho}} \right )  \\
& \ \times  \prod_{j\neq i} \Phi \left ( \frac{t-\nu_j}{\sqrt{1-\rho}} \right ) \frac{\partial}{\partial s} \left ( \prod_{j=1}^k \Phi \left ( \frac{s-\nu_j}{\sqrt{1-\rho}} \right ) \right ) \Bigg \} ds dt.
\end{align*}
Hence, $\Delta_i$ can be expressed as the average of these two forms, giving
\begin{align*}
\Delta_i = & \ \frac{1}{2\rho^2 \sqrt{1-\rho}} \int_{-\infty}^{\infty} \int_{-\infty}^{\infty} \Bigg \{ \phi \left ( \frac{s-\nu_0}{\sqrt{\rho}} \right ) \phi \left ( \frac{t-\nu_0}{\sqrt{\rho}} \right ) \phi \left ( \frac{s-\nu_i}{\sqrt{1-\rho}} \right ) \phi \left ( \frac{t-\nu_i}{\sqrt{1-\rho}} \right ) \\
& \ \times  \prod_{j\neq i} \Phi \left ( \frac{s-\nu_j}{\sqrt{1-\rho}} \right ) \prod_{j\neq i} \Phi \left ( \frac{t-\nu_j}{\sqrt{1-\rho}} \right ) (s-t) \left ( H_i(s, \boldsymbol{\nu}) - H_i(t, \boldsymbol{\nu})\right )  \Bigg \} ds dt,
\end{align*}
where
\[
H_i(t, \boldsymbol{\nu}) = \frac{\Phi \left ( \frac{t-\nu_i}{\sqrt{1-\rho}} \right )}{\phi \left ( \frac{t-\nu_i}{\sqrt{1-\rho}} \right )}\frac{\frac{\partial}{\partial t} \left ( \prod_{j=1}^k \Phi \left ( \frac{t-\nu_j}{\sqrt{1-\rho}} \right ) \right )}{\prod_{j=1}^k \Phi \left ( \frac{t-\nu_j}{\sqrt{1-\rho}} \right )}.
\]
Now 
\[
(s-t) \left ( H_i(s, \boldsymbol{\nu}) - H_i(t, \boldsymbol{\nu}) \right ) = (s-t)\int_t^s \frac{\partial H_i}{\partial u}(u, \boldsymbol{\nu}) du
\]
which is positive for all $s$ and $t$ if the integrand is positive for all $u$. In order to show that $\Delta_i \geq 0$, it is therefore sufficient show that $\frac{\partial H_i}{\partial t}(t, \boldsymbol{\nu}^i) \geq 0$
for all $i=1, \dots, k$, and $\nu_1 \geq \cdots \geq \nu_i$ with strict inequality unless $\nu_1=\nu_2=\cdots=\nu_i$.

Let $m(x) = \phi(x) / \Phi (x)$ be the inverse Mill's ratio. Then
\[
H_i(t, \boldsymbol{\nu}) = \frac{1}{\sqrt{1-\rho}} \sum_{j=1}^k \frac{m\left ( \frac{t-\nu_j}{\sqrt{1-\rho}} \right )}{m\left ( \frac{t-\nu_i}{\sqrt{1-\rho}} \right )}.
\]
Using the fact that $m'(x)=-m(x)(x+m(x))$, we obtain
\begin{align*}
\frac{\partial H_i}{\partial t}(t, \boldsymbol{\nu}^i) 
& = \frac{1}{1-\rho} \sum_{j=1}^k \left ( \frac{m'\left ( \frac{t-\nu_j}{\sqrt{1-\rho}} \right )}{m\left ( \frac{t-\nu_i}{\sqrt{1-\rho}} \right )} - \frac{m\left ( \frac{t-\nu_j}{\sqrt{1-\rho}} \right )m'\left ( \frac{t-\nu_i}{\sqrt{1-\rho}} \right )}{m\left ( \frac{t-\nu_j}{\sqrt{1-\rho}} \right )^2} \right ) \\
&= \frac{1}{1-\rho} \sum_{j=1}^k \frac{m\left ( \frac{t-\nu_j}{\sqrt{1-\rho}} \right )}{m\left ( \frac{t-\nu_i}{\sqrt{1-\rho}} \right )}\left ( \frac{\nu_j -\nu_i}{\sqrt{1-\rho}} + m\left ( \frac{t-\nu_i}{\sqrt{1-\rho}} \right ) - m\left ( \frac{t-\nu_j}{\sqrt{1-\rho}} \right )\right ) \\
&= \frac{1}{(1-\rho)^{3/2}} \sum_{j=1}^k \frac{m\left ( \frac{t-\nu_j}{\sqrt{1-\rho}} \right )}{m\left ( \frac{t-\nu_i}{\sqrt{1-\rho}} \right )} \int_{\nu_i}^{\nu_j} \left ( 1 + m' \left ( \frac{t-u}{\sqrt{1-\rho}} \right ) \right ) du.
\end{align*}
But $m'(x)>-1$ for all $x \in \mathbb{R}$ (see Sampford \cite{samp53}) and hence, letting $\nu_{k} \leq \dots \leq \nu_{i+1} \to -\infty$,
\[
\frac{\partial H_i}{\partial t}(t, \boldsymbol{\nu}^i) \geq 0
\] 
for all $i=1, \dots, k$, and $\nu_1 \geq \cdots \geq \nu_i$ with strict inequality unless $\nu_1=\nu_2=\cdots=\nu_i$, as required. 

%\acks
% Place the text of your acknowledgements after the \acks command.
% \acks generates the heading "Acknowledgements".
% If you wish to make only one acknowledgement, use \ack.
% \ack generates the heading "Acknowledgement".

\bibliography{multivarbib}

\end{document}